\documentclass[11pt,dvipsnames,a4paper]{article}
\usepackage[affil-it]{authblk}
\usepackage[utf8]{inputenc}
\usepackage{amsmath, amssymb, mathrsfs, amsthm, amsfonts}
\usepackage{latexsym}
\usepackage{hyperref}
\usepackage{a4wide}
\usepackage{color,xcolor}
\usepackage{graphicx}
\usepackage{fancyhdr}

\newtheorem{theorem}{Theorem}[section]

\newtheorem{lemma}[theorem]{Lemma}
\newtheorem{proposition}[theorem]{Proposition}

\theoremstyle{definition}

\theoremstyle{remark}

\pagestyle{fancy}
\fancyhf{}
\fancyhead[LE,RO]{\leftmark}
\fancyhead[RE,LO]{\sl Positive Maps for $G(m,1,n)$}
\fancyfoot[RE,LO]{\color{MidnightBlue} H. Randriamaro}
\fancyfoot[LE,RO]{\color{MidnightBlue} \sffamily Page \thepage \hspace*{1pt} of 8}

\title{Completely Positive Maps for Imprimitive Reflection Groups}

\author{Hery Randriamaro
	\thanks{This research was funded by my mother \\
		Lot II B 32 bis Faravohitra, 101 Antananarivo, Madagascar \\
		e-mail: \texttt{hery.randriamaro@gmail.com}}}

\begin{document}

\thispagestyle{empty}

\noindent {\color{MidnightBlue} \rule{\linewidth}{2pt}}

\vspace*{15pt}

\noindent \textsl{\Huge Completely Positive Maps for Imprimitive Reflection Groups}

\vspace*{25pt}

\noindent \textbf{\Large Hery Randriamaro${}^1$} 

\vspace*{10pt}

\noindent ${}^1$\: This research was funded by my mother \\ \textsc{Lot II B 32 bis Faravohitra, 101 Antananarivo, Madagascar} \\ \texttt{hery.randriamaro@gmail.com}

\vspace*{25pt}

\noindent \textsc{\large Abstract} \\
This article proves the existence of completely positive quasimultiplicative maps from the group algebra of imprimitive reflection groups to the set of bounded operators, and uses those linear maps to define creation and annihilation operators on the full Fock space.

\vspace*{10pt}

\noindent \textsc{Keywords}: Bounded Operator, Quasimultiplicative Map, Fock Space

\vspace*{10pt}

\noindent \textsc{MSC Number}: 05E15, 47L30, 81R10

\vspace*{10pt}

\noindent {\color{MidnightBlue} \rule{\linewidth}{2pt}}

\vspace*{10pt}

\section{Introduction}

\noindent Recall that, for $m,n \in \mathbb{N}^*$, the complex reflection group $G(m,1,n)$ is generated by reflections $s_0, s_1, s_2, \dots, s_{n-1}$ on $\mathbb{C}^n$ subject to the relations
\begin{align*}
& s_0^m = s_1^2 = s_2^2 = \dots = s_{n-1}^2 = 1,
& s_0 s_1 s_0 s_1 = s_1 s_0 s_1 s_0, \\
& \forall i \in [n-2]:\, s_i s_{i+1} s_i = s_{i+1} s_i s_{i+1},
& \forall i,j \in [0,n-1],\, |i-j| \geq 2:\, s_i s_j = s_j s_i.
\end{align*}

\noindent The study of cyclotomic descent algebras by Mathas and Orellana \cite[§~2]{MaOr} leads to consider the reflections $t_1 := s_0$, and $t_{i+1} := s_i t_i s_i$ for $i \in [n-1]$. The subgroup $T := \langle t_1, t_2, \dots, t_n \rangle$ is isomorphic to $(\mathbb{Z}/m\mathbb{Z})^n$, and is normal in $G(m,1,n)$. Since $\langle s_1, s_2, \dots, s_{n-1} \rangle$ is the symmetric group $\mathrm{Sym}(n)$ of order $n$, then $G(m,1,n) = T \rtimes \mathrm{Sym}(n)$. As set, we have $$G(m,1,n) = \big\{t_1^{\alpha_1} t_2^{\alpha_2} \dots t_n^{\alpha_n} w\ \big|\ \alpha_1, \alpha_2, \dots, \alpha_n \in [0,m-1],\, w \in \mathrm{Sym}(n)\big\}.$$
Hence, the set $\varPi = \{t_1, t_2, \dots, t_n, s_1, s_2, \dots, s_{n-1}\}$ generates $G(m,1,n)$ so that Mathas and Orellana could define the length function $\mathrm{l}: G(m,1,n) \rightarrow \mathbb{N}$ given by \cite[Definition~2.3]{MaOr} $$\mathrm{l}(g) := \min \{k \in \mathbb{N}\ |\ \exists r_1, r_2, \dots, r_k \in \varPi:\, g = r_1 r_2 \dots r_k\}.$$

\noindent Let call $\varPi$ a set of representative reflections of $G(m,1,n)$. In addition to those above, the reflections in $\varPi$ are also subject to the relations $$\forall i,j \in [n],\, i \neq n,\, |i-j| \geq 2:\ s_i t_i s_i t_i = t_i s_i t_i s_i,\ s_i t_{i+1} s_i t_{i+1} = t_{i+1} s_i t_{i+1} s_i,\ s_i t_j = t_j s_i.$$

\noindent We work on a complex Hilbert space $\mathbf{H}$ endowed with an inner product $\langle \centerdot , \centerdot \rangle: \mathbf{H} \times \mathbf{H} \rightarrow \mathbb{C}$. Recall that $\mathbf{H}$ is also a normed space with norm $\|\centerdot\|: \mathbf{H} \rightarrow \mathbb{R}_+$ defined by $\|x\| := \sqrt{\langle x,x \rangle}$.

\noindent A bounded operator on $\mathbf{H}$ is a linear operator $\mathsf{L}: \mathbf{H} \rightarrow \mathbf{H}$ for which there exists a number $\alpha \in \mathbb{R}_+$ such that, for every $x \in \mathbf{H}$, $\|\mathsf{L} x\| \leq \alpha \|x\|$. The set $\mathscr{B}_{\mathbf{H}}$ of bounded operators on $\mathbf{H}$ is a normed algebra with norm $\|.\|_{\mathrm{op}}$ defined, for every bounded operator $\mathsf{L}$, by $$\|\mathsf{L}\|_{\mathrm{op}} := \inf \big\{\alpha \in \mathbb{R}_+\ \big|\ \forall x \in \mathbf{H}:\, \|\mathsf{L} x\| \leq \alpha \|x\|\big\}.$$

\noindent The adjoint of $\mathsf{L} \in \mathscr{B}_{\mathbf{H}}$ is the linear operator $\mathsf{L}^*: \mathbf{H} \rightarrow \mathbf{H}$ such that $\langle \mathsf{L}^*x,y \rangle = \langle x, \mathsf{L}y \rangle$ for every $x,y \in \mathbf{H}$. Recall that $\mathsf{L}$ is said self-adjoint if $\mathsf{L}^* = \mathsf{L}$.

\smallskip

\noindent Let $\displaystyle f = \sum_{u \in G(m,1,n)} f(u)u$ and $\displaystyle g = \sum_{u \in G(m,1,n)} g(u)u$ belong to $\mathbb{C}G(m,1,n)$. The group algebra $\mathbb{C}G(m,1,n)$ is also a Hilbert space for the inner product $\langle \centerdot , \centerdot \rangle: \mathbb{C}G(m,1,n) \times \mathbb{C}G(m,1,n) \rightarrow \mathbb{C}$ defined by $\displaystyle \langle f,g \rangle := \sum_{u \in G(m,1,n)} \overline{f(u)} g(u)$. Moreover, as the map $g \mapsto fg$ is a linear operator on $\mathbb{C}G(m,1,n)$, we may define the adjoint of $f$ by $\displaystyle f^* := \sum_{u \in G(m,1,n)} \overline{f(u^{-1})}\,u \in \mathbb{C}G(m,1,n)$.

\noindent A linear map $\varphi: \mathbb{C}G(m,1,n) \rightarrow \mathscr{B}_{\mathbf{H}}$ is quasimultiplicative if $\varphi(1) = \mathrm{id}_{\mathbf{H}}$ and $$\forall u,v \in G(m,1,n):\ \mathrm{l}(uv) = \mathrm{l}(u) + \mathrm{l}(v) \, \Rightarrow \, \varphi(uv) = \varphi(u) \varphi(v).$$

\noindent And a linear map $\varphi: \mathbb{C}G(m,1,n) \rightarrow \mathscr{B}_{\mathbf{H}}$ is completely positive if $$\forall k \in \mathbb{N}^*,\, f_1, \dots, f_k \in \mathbb{C}G(m,1,n),\, x_1, \dots, x_k \in \mathbf{H}:\ \Big\langle \sum_{i,j=1}^k \varphi(f_j^*\,f_i)x_i,\, x_j\Big\rangle \geq 0.$$

\begin{theorem} \label{ThPo}
Let $\varPi = \{t_1, t_2, \dots, t_n, s_1, s_2, \dots, s_{n-1}\}$ be a representative reflection set of the complex reflection group $G(m,1,n)$, and $\mathsf{S}_1, \dots, \mathsf{S}_{n-1}, \mathsf{T}_1, \dots, \mathsf{T}_n \in \mathscr{B}_{\mathbf{H}}$ such that
\begin{itemize}
\item $\forall i \in [n-1]:\ \|\mathsf{S}_i\|_{\mathrm{op}} \leq 1,\, \mathsf{S}_i^* = \mathsf{S}_i$,
\item $\displaystyle \forall j \in [n]:\ \|\mathsf{T}_j\|_{\mathrm{op}} \leq 1,\, \sum_{i \in [m-1]} (\mathsf{T}_j^i)^* = \sum_{i \in [m-1]} \mathsf{T}_j^i$,
\item the bounded operators satisfy the following braid relations:
\begin{align*}
& \forall i \in [n-2]:\ \mathsf{S}_i \mathsf{S}_{i+1} \mathsf{S}_i = \mathsf{S}_{i+1} \mathsf{S}_i \mathsf{S}_{i+1}, \quad \forall i,j \in [n-1],\, |i-j| \geq 2:\ \mathsf{S}_i \mathsf{S}_j = \mathsf{S}_j \mathsf{S}_i, \\
& \forall i,j \in [n]:\ \mathsf{T}_i \mathsf{T}_j = \mathsf{T}_j \mathsf{T}_i, \quad \forall i,j \in [n],\, i \neq n,\, |i-j| \geq 2:\ \mathsf{S}_i \mathsf{T}_j = \mathsf{T}_j \mathsf{S}_i, \\
& \forall i \in [n-1]:\ \mathsf{S}_i \mathsf{T}_i \mathsf{S}_i \mathsf{T}_i = \mathsf{T}_i \mathsf{S}_i \mathsf{T}_i \mathsf{S}_i,\ \mathsf{S}_i \mathsf{T}_{i+1} \mathsf{S}_i \mathsf{T}_{i+1} = \mathsf{T}_{i+1} \mathsf{S}_i \mathsf{T}_{i+1} \mathsf{S}_i.
\end{align*}
\end{itemize}
Then, the quasimultiplicative linear map $\varphi: \mathbb{C}G(m,1,n) \rightarrow \mathscr{B}_{\mathbf{H}}$ given by $$\varphi(1) = \mathrm{id}_{\mathbf{H}}, \quad \forall i \in [n-1]:\, \varphi(s_i) = \mathsf{S}_i \quad \text{and} \quad \forall j \in [n]:\, \varphi(t_j) = \mathsf{T}_j$$ is completely positive.
\end{theorem}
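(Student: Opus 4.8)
The plan is to reduce complete positivity to a positive-definiteness statement and then prove the latter by a Fock-type dilation, in the style of Bo\.{z}ejko and Speicher's treatment of Coxeter groups, but keeping track of the extra relations of $G(m,1,n)$ and of the fact that the reflections $t_1,\dots,t_n$ have order $m$ rather than $2$. Unwinding the definition: writing $f_i=\sum_u f_i(u)u$ gives $\varphi(f_j^{*}f_i)=\sum_{u,v}\overline{f_j(v)}\,f_i(u)\,\varphi(v^{-1}u)$, so setting $\eta_u:=\sum_i f_i(u)x_i$ one gets $\big\langle\sum_{i,j}\varphi(f_j^{*}f_i)x_i,x_j\big\rangle=\sum_{u,v}\langle\varphi(v^{-1}u)\eta_u,\eta_v\rangle$, and as $f_i,x_i$ vary the finitely supported families $(\eta_u)_{u\in G(m,1,n)}$ range over everything. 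Hence $\varphi$ is completely positive if and only if $\Phi:=\varphi|_{G(m,1,n)}$ is an operator-valued positive-definite function on the group, and for that it suffices to build a Hilbert space $\mathbf K$, a unitary representation $\pi\colon G(m,1,n)\to\mathscr B_{\mathbf K}$ and an isometry $V\colon\mathbf H\to\mathbf K$ with $\Phi(g)=V^{*}\pi(g)V$ for all $g$; indeed then $\sum_{u,v}\langle\Phi(v^{-1}u)\eta_u,\eta_v\rangle=\big\|\sum_u\pi(u)V\eta_u\big\|^{2}\ge0$. Quasimultiplicativity of $\varphi$ translates into $V^{*}\pi(u)(\mathrm{id}_{\mathbf K}-VV^{*})\pi(v)V=0$ whenever $\mathrm{l}(uv)=\mathrm{l}(u)+\mathrm{l}(v)$, which is the algebraic signature of a Fock space graded by the length function, and so points to the model to use.

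For the model, take $\mathbf K_0:=\mathbb C G(m,1,n)\otimes\mathbf H\cong\bigoplus_g\mathbf H$ with a Gram-type form $\langle g\otimes x,\,h\otimes y\rangle_P:=\langle P_{g,h}x,y\rangle_{\mathbf H}$, where $P_{g,h}\in\mathscr B_{\mathbf H}$ is assembled from the operators $\mathsf S_i$ and the powers $\mathsf T_j^{\,a}$ according to the combinatorics of reduced $\varPi$-words for $g$ and $h$ (each power $t_j^{\,a}$ treated as a single syllable of length $a$), normalised so that $P_{g,1}=\varphi(g)^{*}$ and $P_{1,1}=\mathrm{id}_{\mathbf H}$. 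For $r\in\varPi$ let $\pi(r)$ be the $r$-deformed left shift: $\pi(r)(g\otimes x)=rg\otimes x$ when $\mathrm{l}(rg)>\mathrm{l}(g)$, and otherwise $rg\otimes\mathsf R_r x$ plus the length-increasing correction forced by the form, where $\mathsf R_r=\mathsf S_i$ if $r=s_i$ and $\mathsf R_r$ is the relevant power of $\mathsf T_j$ if $r=t_j$. One then has to check that the $\pi(r)$ are bounded and preserve $\langle\cdot,\cdot\rangle_P$, that they satisfy the defining relations of $G(m,1,n)$ recorded in the Introduction --- this is exactly where the braid relations among the $\mathsf S_i,\mathsf T_j$ and the identities $\mathsf S_i^{*}=\mathsf S_i$ and $\sum_{a\in[m-1]}(\mathsf T_j^{\,a})^{*}=\sum_{a\in[m-1]}\mathsf T_j^{\,a}$ get consumed, together with the analogue of Matsumoto's theorem for the generating set $\varPi$ supplied by \cite{MaOr} --- and that $Vx:=1\otimes x$ is isometric with $V^{*}\pi(g)V=\varphi(g)$, which is immediate from the normalisation of $P_{g,1}$.

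What remains, and what I expect to carry most of the weight, is positive semidefiniteness of $\langle\cdot,\cdot\rangle_P$: once that is known, $\mathbf K$ is the completion of $\mathbf K_0/\ker\langle\cdot,\cdot\rangle_P$, $\pi$ descends to a unitary representation, and the first paragraph concludes. Equivalently, one must show $[P_{g,h}]_{g,h\in F}\ge0$ for every finite $F\subseteq G(m,1,n)$. I would argue by induction on $\max_{g\in F}\mathrm{l}(g)$: choosing a representative reflection $r$ occurring as a leftmost letter of a longest element of $F$ and using the exchange and deletion properties of $\mathrm{l}$ on $G(m,1,n)$ (again from \cite{MaOr}), one rewrites $[P_{g,h}]_F=A^{*}[P_{g',h'}]_{F'}A+\text{remainder}$, where $F'$ has strictly smaller maximal length, $A$ is a block operator built from $\mathrm{id}_{\mathbf H}$, the $\mathsf S_i$ and $\mathsf T_j$-string operators, and the remainder is a sum of congruences of the inductive matrix by operators of the same kind; the bounds $\|\mathsf S_i\|_{\mathrm{op}}\le1$, $\|\mathsf T_j\|_{\mathrm{op}}\le1$ keep $A$ contractive, while $\mathsf S_i^{*}=\mathsf S_i$ and $\sum_{a\in[m-1]}(\mathsf T_j^{\,a})^{*}=\sum_{a\in[m-1]}\mathsf T_j^{\,a}$ keep the remainder self-adjoint and nonnegative.

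The genuinely new difficulty, and the step I expect to be the main obstacle, is running this induction across the order-$m$ reflections. In the Coxeter case every generator is an involution and the recursion peels off one length-one letter at a time; here a reduced $\varPi$-word carries $t_j$-strings $t_j^{\,a}$ with $1\le a\le m-1$, so the normal-form combinatorics for $\varPi$ and the recursion for the $P_{g,h}$ must both be reorganised around such strings. Concretely, peeling a whole $t_j$-string off $F$ produces a ``cyclic remainder'' of the shape $\sum_{a=1}^{m-1}\varepsilon_a\,\mathsf T_j^{\,a}\otimes(\cdots)$ whose nonnegativity has to be established, and this is the precise point at which the hypothesis $\sum_{a\in[m-1]}(\mathsf T_j^{\,a})^{*}=\sum_{a\in[m-1]}\mathsf T_j^{\,a}$ has to be exploited; making that implication work is, I expect, the technical heart of the argument.
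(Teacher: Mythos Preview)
Your first paragraph is correct and matches the paper: unwinding the definition shows that complete positivity of $\varphi$ is equivalent to the operator-valued kernel $(u,v)\mapsto\varphi(v^{-1}u)$ being positive definite on $G(m,1,n)$, and the paper exploits precisely this (in the reverse direction, by evaluating $\langle\hat{\mathsf P}y,y\rangle$ at a well-chosen $y\in\mathbb C G(m,1,n)\otimes\mathbf H$). From that point on the two arguments diverge completely.

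The paper never builds a dilation and never runs an induction on length. Instead it proves directly that $\hat{\mathsf P}:=\sum_{u}\lambda(u)\otimes\varphi(u)$ is a positive operator on $\mathbb C G(m,1,n)\otimes\mathbf H$ with its \emph{standard} inner product, where $\lambda$ is the left regular representation. The mechanism is a homotopy: scale each generator to $q\mathsf S_i,\,q\mathsf T_j$ to obtain $\mathsf P_q$; note $\mathsf P_0=\mathrm{id}$; use the factorisation $G(m,1,n)=T\rtimes\mathrm{Sym}(n)$ to write $\mathsf P_q$ as the product of the Bo\.zejko--Speicher symmetric-group operator and $\prod_j\sum_{i=0}^{m-1}(q\mathsf T_j)^i$, both invertible for $q<1$; check self-adjointness (this is the sole place the hypothesis $\sum_a(\mathsf T_j^{\,a})^{*}=\sum_a\mathsf T_j^{\,a}$ is spent); and conclude by continuity of the bottom of the spectrum that $\mathsf P_q>0$ for $q<1$, hence $\mathsf P\ge0$ at $q=1$. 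Since $\|\lambda(r)\|_{\mathrm{op}}=1$, the same proposition applies verbatim to $\hat\varphi=\lambda\otimes\varphi$, giving $\hat{\mathsf P}\ge0$ and thus complete positivity.

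Your programme is therefore a genuinely different route, and a much heavier one. The explicit Gram kernel $P_{g,h}$ assembled from reduced-word combinatorics, the verification that your deformed shifts $\pi(r)$ extend to a unitary representation of $G(m,1,n)$, and above all the inductive Schur-complement positivity argument across $t_j$-strings are each delicate; you flag the last as ``the technical heart'' without resolving it, so as written the proposal has a real gap. Two further remarks: first, once positive-definiteness is known the Naimark/GNS dilation comes for free, so constructing $\pi$ by hand is unnecessary; second, what you describe as ``the style of Bo\.zejko and Speicher'' is in fact the Fock-representation half of their paper, whereas their proof of complete positivity---and this paper's---is the homotopy argument, which sidesteps every combinatorial difficulty you anticipate.
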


\noindent Take some vector $\varOmega \in \mathbf{H}$ with $\|\varOmega\| = 1$ called vacuum. The full Fock space for $\mathbf{H}$ is $\displaystyle \bigoplus_{n \in \mathbb{N}} \mathbf{H}^{\otimes n}$ where $\mathbf{H}^0 := \mathbb{C}\varOmega$. Assume that we are given some operators $\mathsf{S}, \mathsf{T}$ such that
\begin{itemize}
	\item $\mathsf{S} \in \mathscr{B}_{\mathbf{H} \otimes \mathbf{H}}$, $\|\mathsf{S}\|_{\mathrm{op}} < 1$, $\mathsf{S}^* = \mathsf{S}$, and $\mathsf{S}$ fulfills the braid relation also called Yang-Baxter equation $(\mathrm{id}_{\mathbf{H}} \otimes \mathsf{S}) (\mathsf{S} \otimes \mathrm{id}_{\mathbf{H}}) (\mathrm{id}_{\mathbf{H}} \otimes \mathsf{S}) = (\mathsf{S} \otimes \mathrm{id}_{\mathbf{H}}) (\mathrm{id}_{\mathbf{H}} \otimes \mathsf{S}) (\mathsf{S} \otimes \mathrm{id}_{\mathbf{H}})$ \cite[§~8.1]{Ka},
	\item $\mathsf{T} \in \mathscr{B}_{\mathbf{H}}$, $\|\mathsf{T}\|_{\mathrm{op}} < 1$, $\displaystyle \sum_{i \in [m-1]} (\mathsf{T}^i)^* = \sum_{i \in [m-1]} \mathsf{T}^i$, and $\mathsf{T}$ fulfills the relations
	$$(\mathsf{T} \otimes \mathrm{id}_{\mathbf{H}}) \mathsf{S} (\mathsf{T} \otimes \mathrm{id}_{\mathbf{H}}) \mathsf{S} = \mathsf{S} (\mathsf{T} \otimes \mathrm{id}_{\mathbf{H}}) \mathsf{S} (\mathsf{T} \otimes \mathrm{id}_{\mathbf{H}}),$$
	$$(\mathrm{id}_{\mathbf{H}} \otimes \mathsf{T}) \mathsf{S} (\mathrm{id}_{\mathbf{H}} \otimes \mathsf{T}) \mathsf{S} = \mathsf{S} (\mathrm{id}_{\mathbf{H}} \otimes \mathsf{T}) \mathsf{S} (\mathrm{id}_{\mathbf{H}} \otimes \mathsf{T}).$$
\end{itemize}

\noindent Let $n \in \mathbb{N}^*$, $i \in [n-1]$, and $j \in [n]$. Define the operators $\mathsf{S}_i, \mathsf{T}_j \in \mathscr{B}_{\mathbf{H}^{\otimes n}}$ by
$$\mathsf{S}_i := \overbrace{\mathrm{id}_{\mathbf{H}} \otimes \dots \otimes \mathrm{id}_{\mathbf{H}}}^{i-1\ \text{times}} \otimes \mathsf{S} \otimes \overbrace{\mathrm{id}_{\mathbf{H}} \otimes \dots \otimes \mathrm{id}_{\mathbf{H}}}^{n-i-1\ \text{times}}\ \text{and}\ \mathsf{T}_j := \overbrace{\mathrm{id}_{\mathbf{H}} \otimes \dots \otimes \mathrm{id}_{\mathbf{H}}}^{j-1\ \text{times}} \otimes \mathsf{T} \otimes \overbrace{\mathrm{id}_{\mathbf{H}} \otimes \dots \otimes \mathrm{id}_{\mathbf{H}}}^{n-j\ \text{times}}.$$
The $\mathsf{S}_i$'s and $\mathsf{T}_j$'s fulfill the assumptions of Theorem~\ref{ThPo}. Define $\mathsf{P}_n: \mathbf{H}^{\otimes n} \rightarrow \mathbf{H}^{\otimes n}$ as the operator given by
$$\mathsf{P}_0 := \mathrm{id}_{\mathbb{C}\varOmega} \quad \text{and} \quad \mathsf{P}_n := \sum_{u \in G(m,1,n)} \varphi(u),$$
where $\varphi$ is the quasimultiplicative map of Theorem~\ref{ThPo} with $\varphi(s_i) = \mathsf{S}_i$ and $\varphi(t_j) = \mathsf{T}_j$.

\begin{theorem} \label{ThQu}
The sesquilinear form $\displaystyle \langle \centerdot , \centerdot \rangle_G: \bigoplus_{n \in \mathbb{N}} \mathbf{H}^{\otimes n} \times \bigoplus_{n \in \mathbb{N}} \mathbf{H}^{\otimes n} \rightarrow \mathbb{C}$ given by
$$\forall X \in \mathbf{H}^{\otimes k},\, \forall Y \in \mathbf{H}^{\otimes n}:\ \langle X,Y \rangle_G := \delta_{kn} \langle X,\, \mathsf{P}_n Y \rangle$$ is an inner product so that, for each $x \in \mathbf{H}$, one can define creation and annihilation operators $\displaystyle \mathsf{d}^*(x): \bigoplus_{n \in \mathbb{N}} \mathbf{H}^{\otimes n} \rightarrow \bigoplus_{n \in \mathbb{N}} \mathbf{H}^{\otimes n}$ and $\displaystyle \mathsf{d}(x): \bigoplus_{n \in \mathbb{N}} \mathbf{H}^{\otimes n} \rightarrow \bigoplus_{n \in \mathbb{N}} \mathbf{H}^{\otimes n}$ respectively which are adjoint with respect to $\langle \centerdot , \centerdot \rangle_G$. Moreover, we have $\displaystyle \|\mathsf{d}^*(x)\|_{\mathrm{op}} \leq \frac{\|x\|}{\sqrt{\big(1- \|\mathsf{S}\|_{\mathrm{op}}\big) \big(1- \|\mathsf{T}\|_{\mathrm{op}}\big)}}$.
\end{theorem}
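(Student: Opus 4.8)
\emph{Proof plan.} I would base everything on the operators $\mathsf{P}_n\in\mathscr{B}_{\mathbf{H}^{\otimes n}}$, $\mathsf{P}_n=\sum_{u\in G(m,1,n)}\varphi(u)$. Since $\langle X,Y\rangle_G=\delta_{kn}\langle X,\mathsf{P}_nY\rangle$, the form $\langle\centerdot,\centerdot\rangle_G$ is an inner product precisely when every $\mathsf{P}_n$ is bounded, self-adjoint and bounded below by a positive constant; boundedness is clear. Self-adjointness and positivity I would deduce from Theorem~\ref{ThPo}: the averaging idempotent $e_n:=|G(m,1,n)|^{-1}\sum_{u\in G(m,1,n)}u\in\mathbb{C}G(m,1,n)$ satisfies $e_n^*=e_n$ and $e_n^2=e_n$, so the complete positivity inequality with $k=1$ and $f_1=|G(m,1,n)|^{1/2}e_n$ gives $\langle\mathsf{P}_nX,X\rangle=\langle\varphi(f_1^*f_1)X,X\rangle\ge0$ for all $X\in\mathbf{H}^{\otimes n}$; a complex sesquilinear form that is real (here non-negative) on the diagonal is Hermitian by polarisation, so $\mathsf{P}_n=\mathsf{P}_n^*$ and $\langle\centerdot,\centerdot\rangle_G$ is Hermitian and positive semidefinite. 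A positive semidefinite operator that is moreover invertible has spectrum in some $[c,\infty)$ with $c>0$, hence is bounded below, so it remains only to prove that each $\mathsf{P}_n$ is invertible.

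I would prove invertibility by induction on $n$, with $\mathsf{P}_0=\mathrm{id}$. Using the subgroup chain $G(m,1,n-1)\le G(m,1,n-1)\times\langle t_n\rangle\le G(m,1,n)$, the length function of \cite{MaOr}, and quasimultiplicativity of $\varphi$ on length-additive products, with the minimal-length coset representatives $1,t_n,\dots,t_n^{m-1}$ and $1,s_{n-1},s_{n-2}s_{n-1},\dots,s_1s_2\cdots s_{n-1}$ one obtains
$$\mathsf{P}_n=\Big(\mathsf{P}_{n-1}\otimes\big(\mathrm{id}_{\mathbf{H}}+\mathsf{T}+\dots+\mathsf{T}^{m-1}\big)\Big)\,\mathsf{Q}_n,\qquad\mathsf{Q}_n:=\mathrm{id}+\mathsf{S}_{n-1}+\mathsf{S}_{n-2}\mathsf{S}_{n-1}+\dots+\mathsf{S}_1\mathsf{S}_2\cdots\mathsf{S}_{n-1}.$$
The left-hand tensor factor $B:=\mathsf{P}_{n-1}\otimes(\mathrm{id}_{\mathbf{H}}+\mathsf{T}+\dots+\mathsf{T}^{m-1})$ is invertible: $\mathsf{P}_{n-1}$ by the inductive hypothesis, and $\mathrm{id}_{\mathbf{H}}+\mathsf{T}+\dots+\mathsf{T}^{m-1}=\prod_{j=1}^{m-1}\big(\mathrm{id}_{\mathbf{H}}-e^{2\pi ij/m}\mathsf{T}\big)$ as a product of operators invertible since $\|\mathsf{T}\|_{\mathrm{op}}<1$; that last operator is also $\ge0$ by Theorem~\ref{ThPo} applied to $G(m,1,1)$, so in fact $B>0$.

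\textbf{The step I expect to be the main obstacle is the invertibility of $\mathsf{Q}_n$.} As $\|\mathsf{S}\|_{\mathrm{op}}$ may be close to $1$, $\|\mathsf{Q}_n-\mathrm{id}\|_{\mathrm{op}}$ need not be $<1$, so a Neumann series is not available. I would instead observe that, from $\mathsf{P}_n=B\mathsf{Q}_n=\mathsf{Q}_n^*B$ ($\mathsf{P}_n$ and $B$ being self-adjoint), the operator $B^{1/2}\mathsf{Q}_nB^{-1/2}=B^{-1/2}\mathsf{P}_nB^{-1/2}$ is self-adjoint and $\ge0$; hence $\mathsf{Q}_n$ is similar to a positive operator, $\sigma(\mathsf{Q}_n)\subseteq[0,\infty)$, and everything comes down to showing that $\mathsf{Q}_n$ is bounded below, i.e.\ $0\notin\sigma(\mathsf{Q}_n)$. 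This is exactly the operator-valued version of the Bo\.{z}ejko--Speicher invertibility of the braided $R$-operator of $\mathrm{Sym}(n)$, and it is where the strict bound $\|\mathsf{S}\|_{\mathrm{op}}<1$ is essential; I would obtain it either from the sub-recursion $\mathsf{Q}_n=\mathrm{id}+(\mathsf{Q}_{n-1}\otimes\mathrm{id}_{\mathbf{H}})\mathsf{S}_{n-1}$ by propagating a suitable lower bound through the induction, or, for finite-dimensional $\mathbf{H}$, from the quantum-determinant identity that exhibits $\det\mathsf{P}_n$ as a product of factors built from $\mathsf{S}$ each of which is invertible when $\|\mathsf{S}\|_{\mathrm{op}}<1$. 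With $\mathsf{Q}_n$ invertible, so is $\mathsf{P}_n$, and $\langle\centerdot,\centerdot\rangle_G$ is an inner product.

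For the creation operator, put $\mathsf{d}^*(x)\xi:=x\otimes\xi$ on $\mathbf{H}^{\otimes n}$ and extend linearly, so $\langle\mathsf{d}^*(x)\xi,\mathsf{d}^*(x)\xi\rangle_G=\langle x\otimes\xi,\mathsf{P}_{n+1}(x\otimes\xi)\rangle$. Now use the minimal left coset representatives $t_1^a\,s_{j-1}s_{j-2}\cdots s_1$ $(0\le a\le m-1$, $1\le j\le n+1)$ of the copy of $G(m,1,n)$ sitting on the last $n$ legs of $\mathbf{H}^{\otimes(n+1)}$ to write $\mathsf{P}_{n+1}=\big(\mathrm{id}_{\mathbf{H}}+\mathsf{T}_1+\dots+\mathsf{T}_1^{m-1}\big)\big(\mathrm{id}+\mathsf{S}_1+\mathsf{S}_2\mathsf{S}_1+\dots+\mathsf{S}_n\mathsf{S}_{n-1}\cdots\mathsf{S}_1\big)\big(\mathrm{id}_{\mathbf{H}}\otimes\mathsf{P}_n\big)$. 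Shifting the first factor onto leg $1$ (so that $x$ becomes $x':=(\mathrm{id}_{\mathbf{H}}+\mathsf{T}+\dots+\mathsf{T}^{m-1})^*x$, with $\|x'\|\le\|x\|/(1-\|\mathsf{T}\|_{\mathrm{op}})$) gives
$$\langle x\otimes\xi,\mathsf{P}_{n+1}(x\otimes\xi)\rangle=\sum_{k=0}^{n}\big\langle x'\otimes\xi,\ \mathsf{S}_k\mathsf{S}_{k-1}\cdots\mathsf{S}_1\,(x\otimes\mathsf{P}_n\xi)\big\rangle=\langle\xi,\mathsf{A}\,\mathsf{P}_n\,\xi\rangle,$$
where, since $\mathsf{S}_k\mathsf{S}_{k-1}\cdots\mathsf{S}_1$ acts only on the first $k+1$ legs, contracting that block against $x'$ and $x$ produces $\mathsf{A}=\sum_{k=0}^{n}\mathsf{A}_k\otimes\mathrm{id}_{\mathbf{H}^{\otimes(n-k)}}$ with $\|\mathsf{A}_k\|_{\mathrm{op}}\le\|\mathsf{S}\|_{\mathrm{op}}^{\,k}\|x'\|\,\|x\|$, so $\|\mathsf{A}\|_{\mathrm{op}}\le\|x'\|\,\|x\|\,(1-\|\mathsf{S}\|_{\mathrm{op}})^{-1}\le\|x\|^2\big((1-\|\mathsf{S}\|_{\mathrm{op}})(1-\|\mathsf{T}\|_{\mathrm{op}})\big)^{-1}$. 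Since the left-hand side is real for every $\xi$, $\mathsf{A}\mathsf{P}_n$ is self-adjoint; as $\mathsf{P}_n>0$ (just proved), $\mathsf{P}_n^{-1/2}(\mathsf{A}\mathsf{P}_n)\mathsf{P}_n^{-1/2}=\mathsf{P}_n^{-1/2}\mathsf{A}\mathsf{P}_n^{1/2}$ is self-adjoint with the same spectrum as $\mathsf{A}$, hence $\le\|\mathsf{A}\|_{\mathrm{op}}$, so $\mathsf{A}\mathsf{P}_n\le\|\mathsf{A}\|_{\mathrm{op}}\mathsf{P}_n$ and therefore $\langle\mathsf{d}^*(x)\xi,\mathsf{d}^*(x)\xi\rangle_G\le\frac{\|x\|^2}{(1-\|\mathsf{S}\|_{\mathrm{op}})(1-\|\mathsf{T}\|_{\mathrm{op}})}\langle\xi,\xi\rangle_G$. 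As $\mathsf{d}^*(x)$ raises tensor degree and the $\mathbf{H}^{\otimes n}$ are mutually $\langle\centerdot,\centerdot\rangle_G$-orthogonal, $\mathsf{d}^*(x)$ is bounded on the whole Fock space with $\|\mathsf{d}^*(x)\|_{\mathrm{op}}\le\|x\|\big((1-\|\mathsf{S}\|_{\mathrm{op}})(1-\|\mathsf{T}\|_{\mathrm{op}})\big)^{-1/2}$, and $\mathsf{d}(x)$ is defined as its $\langle\centerdot,\centerdot\rangle_G$-adjoint (it lowers the tensor degree and annihilates $\varOmega$).
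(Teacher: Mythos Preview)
Your treatment of the creation operator and its norm bound is essentially the paper's: both write $\mathsf{P}_{n+1}$ as $(\mathrm{id}_{\mathbf{H}}\otimes\mathsf{P}_n)$ times an operator $\mathsf{R}_{n+1}$ built from the $t_1^i$'s and the $s_1\cdots s_j$'s, bound $\|\mathsf{R}_{n+1}\|_{\mathrm{op}}\le\big((1-\|\mathsf{S}\|_{\mathrm{op}})(1-\|\mathsf{T}\|_{\mathrm{op}})\big)^{-1}$, and then use the similarity trick (your $\mathsf{P}_n^{-1/2}\mathsf{A}\mathsf{P}_n^{1/2}$ argument, the paper's analogous step) to convert the norm bound into the operator inequality $\mathsf{P}_{n+1}\le\|\mathsf{R}_{n+1}\|_{\mathrm{op}}\,(\mathrm{id}_{\mathbf{H}}\otimes\mathsf{P}_n)$. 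The paper also gives the explicit formula $\mathsf{d}(x)=\mathsf{l}(x)\mathsf{R}_n$ and checks adjointness directly, whereas you simply take the Hilbert-space adjoint; both are fine. (A minor point: in your left--coset decomposition the order of the $\mathsf{T}_1$-factor and the $\mathsf{S}$-factor should match the adjoint of the paper's $\mathsf{R}_{n+1}$, so $\mathsf{S}$-block first; this does not affect the estimates.)

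Where your plan genuinely diverges is the inner-product step, and the obstacle you flag is real. Your inductive reduction to the invertibility of $\mathsf{Q}_n=\mathrm{id}+\mathsf{S}_{n-1}+\cdots+\mathsf{S}_1\cdots\mathsf{S}_{n-1}$ is correct, and your observation that $B^{-1/2}\mathsf{P}_nB^{-1/2}$ is positive and similar to $\mathsf{Q}_n$ gives $\sigma(\mathsf{Q}_n)\subseteq[0,\infty)$. But showing $0\notin\sigma(\mathsf{Q}_n)$ from the sub-recursion $\mathsf{Q}_n=\mathrm{id}+(\mathsf{Q}_{n-1}\otimes\mathrm{id})\mathsf{S}_{n-1}$ does not go through by ``propagating a lower bound'': the perturbation has norm of order $(n-1)\|\mathsf{S}\|_{\mathrm{op}}$, so no Neumann-type estimate survives, and the quantum-determinant route only covers finite-dimensional $\mathbf{H}$. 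In fact $\mathsf{Q}_n$ invertible is, via your own similarity, equivalent to $\mathsf{P}_n>0$, so the induction is not actually reducing the problem.

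The paper avoids this entirely: it invokes Lemma~\ref{LeStr}, which (since here $\|\mathsf{S}_i\|_{\mathrm{op}}=\|\mathsf{S}\|_{\mathrm{op}}<1$ and $\|\mathsf{T}_j\|_{\mathrm{op}}=\|\mathsf{T}\|_{\mathrm{op}}<1$) gives directly that $\mathsf{P}_n$ is strictly positive. That lemma is the Bo\.{z}ejko--Speicher homotopy argument: $\mathsf{P}_q$ is self-adjoint, norm-continuous in $q$, invertible for all $q\in[0,1]$ by Lemma~\ref{LeInv}, and equals $\mathrm{id}$ at $q=0$, so $\mathrm{m}_0(\mathsf{P}_q)>0$ for all $q$. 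This is the missing idea in your plan; once you cite Lemma~\ref{LeStr}, the rest of your argument goes through and coincides with the paper's.
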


\noindent We use the proof strategy of Bo\.{z}ejko and Speicher to prove in Section~\ref{SeTh1} the complete positivity of the quasimultiplicative linear map $\varphi$ in Theorem~\ref{ThPo}, and to build in Section~\ref{SeTh2} the inner product, the creator, and the annihilator in Theorem~\ref{ThQu}.

\section{Completely Positive Maps} \label{SeTh1}

\noindent We prove Theorem~\ref{ThPo} in this section. Remark that we recover completely positive maps on $\mathrm{Sym}(n)$ \cite[Theorem~1.1]{BoSp} by letting $\mathsf{T}_j = 0$ in that theorem.

\begin{lemma} \label{LeInv}
Let $\varPi = \{t_1, t_2, \dots, t_n, s_1, s_2, \dots, s_{n-1}\}$ be a representative reflection set of the complex reflection group $G(m,1,n)$, and $\varphi$ the quasimultiplicative map of Theorem~\ref{ThPo}. If $\|\varphi(r)\|_{\mathrm{op}} < 1$ for every $r \in \varPi$, then the operator $\displaystyle \mathsf{P} := \sum_{u \in G(m,1,n)} \varphi(u) \in \mathscr{B}_{\mathbf{H}}$ is invertible.
\end{lemma}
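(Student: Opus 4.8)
The plan is to write $\mathsf{P} = \sum_{u \in G(m,1,n)} \varphi(u)$ as a sum over the group and organize the terms by the length function $\mathrm{l}$, following the Bożejko–Speicher strategy for Coxeter groups. First I would set up a "Poincaré-type" generating structure: for each reflection $r \in \varPi$, I would introduce the partial sum operators and relate $\mathsf{P}$ to products of the form $\prod_{r}(\mathrm{id}_{\mathbf{H}} + \mathsf{R} + \mathsf{R}^2 + \cdots)$ where $\mathsf{R} = \varphi(r)$, exploiting quasimultiplicativity: whenever a reduced word $u = r_1 r_2 \cdots r_k$ with $r_i \in \varPi$ satisfies $\mathrm{l}(u) = k$, we have $\varphi(u) = \varphi(r_1)\varphi(r_2)\cdots\varphi(r_k)$. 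The key combinatorial input is that $G(m,1,n)$, equipped with $\varPi$ and $\mathrm{l}$, admits a well-behaved deletion/reduced-word theory (from Mathas–Orellana) so that $\mathsf{P}$ factors, up to lower-order corrections, through a product indexed by a reduced-word decomposition of the longest element.

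The core estimate I would carry out is a Neumann-series / perturbation argument. Write $\mathsf{P} = \mathrm{id}_{\mathbf{H}} + \mathsf{Q}$ where $\mathsf{Q} = \sum_{u \neq 1} \varphi(u)$, and bound $\|\mathsf{Q}\|_{\mathrm{op}}$. For each $u \neq 1$, pick a reduced expression and use $\|\varphi(u)\|_{\mathrm{op}} \le \prod_i \|\varphi(r_i)\|_{\mathrm{op}} \le \rho^{\mathrm{l}(u)}$ where $\rho := \max_{r \in \varPi}\|\varphi(r)\|_{\mathrm{op}} < 1$. Then $\|\mathsf{Q}\|_{\mathrm{op}} \le \sum_{u \neq 1}\rho^{\mathrm{l}(u)} = \big(\sum_{u \in G(m,1,n)} \rho^{\mathrm{l}(u)}\big) - 1 = P_{G(m,1,n)}(\rho) - 1$, where $P_{G(m,1,n)}(q) = \prod (\text{finite geometric factors})$ is the Poincaré polynomial of $G(m,1,n)$ in $q$ — which for $G(m,1,n)$ factorizes as $\prod_{k=1}^{n}\frac{1-q^{mk}}{1-q}\cdot(\text{type-}A\text{ part})$, a continuous function of $q$ equal to $1$ at $q=0$. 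Hence for $\rho$ small enough, $\|\mathsf{Q}\|_{\mathrm{op}} < 1$ and $\mathsf{P}$ is invertible by the Neumann series $\mathsf{P}^{-1} = \sum_{k\ge 0}(-\mathsf{Q})^k$. The subtlety is that this only gives invertibility for $\rho$ below a threshold, not for all $\rho < 1$; to get the full range one must use positivity. Here I would invoke Theorem~\ref{ThPo}: complete positivity forces $\varphi(f^*f) \ge 0$, and a suitable choice of $f$ (roughly $f = \sum_{u} q^{\mathrm{l}(u)/2}u$-type elements, or the characteristic function of the group) shows $\mathsf{P}$ is a positive operator; combined with a lower bound $\mathsf{P} \ge c\,\mathrm{id}_{\mathbf{H}}$ for some $c>0$ derived from the product factorization, invertibility follows.

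The main obstacle I anticipate is establishing the strict positive lower bound $\mathsf{P} \ge c\cdot\mathrm{id}_{\mathbf{H}}$ with $c > 0$ when $\rho$ is close to $1$ — the Neumann series alone is insufficient, and one genuinely needs the operator-valued analogue of the statement that the Poincaré series, suitably interpreted as an operator, stays bounded away from zero. Following Bożejko–Speicher, I expect this comes from writing $\mathsf{P}$ as a product of operators of the form $\sum_{k=0}^{\ell}\mathsf{R}^k$ over generators in some order (one factor per generator in a reduced word for the longest element, with the imprimitive part $\sum_{i=0}^{m-1}\mathsf{T}_j^i$ handled by the hypothesis $\sum_i(\mathsf{T}_j^i)^* = \sum_i \mathsf{T}_j^i$ guaranteeing self-adjointness), each factor being invertible because $\|\mathsf{R}\|_{\mathrm{op}} < 1$ makes $\sum_{k=0}^{\ell}\mathsf{R}^k$ a norm-convergent truncation close to $(\mathrm{id}_{\mathbf{H}} - \mathsf{R})^{-1}$, hence invertible. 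Verifying that these factorizations are compatible with quasimultiplicativity — i.e., that the cross terms from the braid relations do not destroy the telescoping — will be the technical heart, and is exactly where the braid and mixed $\mathsf{S}$–$\mathsf{T}$ relations in the hypotheses of Theorem~\ref{ThPo} get used.
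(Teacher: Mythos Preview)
Your proposal has a genuine circularity problem and misses the key structural shortcut the paper uses.

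\textbf{Circularity.} You propose to ``invoke Theorem~\ref{ThPo}'' (complete positivity of $\varphi$) to obtain a lower bound $\mathsf{P}\ge c\,\mathrm{id}_{\mathbf{H}}$ once the Neumann series argument runs out of steam. But in the paper's logical flow, Lemma~\ref{LeInv} is a \emph{prerequisite} for Theorem~\ref{ThPo}: it feeds into Lemma~\ref{LeStr} (strict positivity), then Proposition~\ref{PrPo} (positivity), and only then is Theorem~\ref{ThPo} proved. Invoking Theorem~\ref{ThPo} here would be circular. Likewise, the Neumann series bound $\|\mathsf{Q}\|_{\mathrm{op}}\le P_{G(m,1,n)}(\rho)-1$ is, as you correctly note, useless near $\rho=1$, so by itself it does not prove the lemma.

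\textbf{What the paper actually does.} The paper exploits the semidirect product structure $G(m,1,n)=T\rtimes\mathrm{Sym}(n)$ directly. Every $u$ is uniquely $tw$ with $t\in T$, $w\in\mathrm{Sym}(n)$, and $\mathrm{l}(tw)=\mathrm{l}(t)+\mathrm{l}(w)$, so quasimultiplicativity gives the clean factorization
\[
\mathsf{P}=\Big(\sum_{t\in T}\varphi(t)\Big)\Big(\sum_{w\in\mathrm{Sym}(n)}\varphi(w)\Big).
\]
The second factor is invertible by the Bo\.{z}ejko--Speicher result for $\mathrm{Sym}(n)$ (cited as \cite[Theorem~2.4]{BoSp}). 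For the first factor, since $T\cong(\mathbb{Z}/m\mathbb{Z})^n$ is abelian and the $\mathsf{T}_j$ commute, one has
\[
\sum_{t\in T}\varphi(t)=\prod_{j=1}^{n}\sum_{i=0}^{m-1}\varphi(t_j)^i,
\]
and each factor is invertible by the elementary identity $(\mathrm{id}_{\mathbf{H}}-\varphi(t_j))\sum_{i=0}^{m-1}\varphi(t_j)^i=\mathrm{id}_{\mathbf{H}}-\varphi(t_j)^m$, both sides of which are invertible because $\|\varphi(t_j)\|_{\mathrm{op}}<1$. No positivity, no continuity argument, and no delicate braid-relation bookkeeping is needed at this stage.

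Your final paragraph does gesture at a product factorization of the right shape, but you frame it as an obstacle (``verifying that these factorizations are compatible\ldots will be the technical heart'') rather than as the immediate consequence of $G(m,1,n)=T\rtimes\mathrm{Sym}(n)$ that it is. The ``cross terms from the braid relations'' you worry about simply do not arise once one separates the $T$-part from the $\mathrm{Sym}(n)$-part first.
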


\begin{proof}
We have $\displaystyle \mathsf{P} = \sum_{t \in T} \varphi(t) \sum_{w \in \mathrm{Sym}(n)} \varphi(w)$. On one side, Bo\.{z}ejko and Speicher already proved that $\displaystyle \sum_{w \in \mathrm{Sym}(n)} \varphi(w)$ is invertible \cite[Theorem~2.4]{BoSp}. On the other side,
$$\sum_{t \in T} \varphi(t) = \prod_{j \in [n]} \sum_{i \in [0,m-1]} \varphi(t_j)^i.$$
Remark that $\displaystyle \big(\mathrm{id}_{\mathbf{H}} - \varphi(t_j)\big)\sum_{i \in [0,m-1]} \varphi(t_j)^i = \mathrm{id}_{\mathbf{H}} - \varphi(t_j)^m$. Since $\|\varphi(t_j)^m\|_{\mathrm{op}} < \|\varphi(t_j)\|_{\mathrm{op}} < 1$, both operators $\mathrm{id}_{\mathbf{H}} - \varphi(t_j)^m$ and $\mathrm{id}_{\mathbf{H}} - \varphi(t_j)$ are consequently invertible. Hence the operator $\displaystyle \sum_{i \in [0,m-1]} \varphi(t_j)^i$ is invertible with inverse $\displaystyle \big(\mathrm{id}_{\mathbf{H}} - \varphi(t_j)\big) \sum_{k \in \mathbb{N}} \varphi(t_j)^{mk}$.
\end{proof}

\noindent An operator $\mathsf{L} \in \mathscr{B}_{\mathbf{H}}$ is said positive and strictly positive if, respectively, $\langle \mathsf{L}x,x \rangle \geq 0$ and $\langle \mathsf{L}x,x \rangle > 0$ for every nonzero vector $x \in \mathbf{H}$.

\begin{lemma} \label{LeStr}
Let $\varPi = \{t_1, t_2, \dots, t_n, s_1, s_2, \dots, s_{n-1}\}$ be a representative reflection set of the complex reflection group $G(m,1,n)$, and $\varphi$ the quasimultiplicative map of Theorem~\ref{ThPo}. If $\|\varphi(r)\|_{\mathrm{op}} < 1$ for every $r \in \varPi$, then the operator $\displaystyle \mathsf{P} = \sum_{u \in G(m,1,n)} \varphi(u)$ is strictly positive.
\end{lemma}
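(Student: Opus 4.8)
The plan is to combine Lemma \ref{LeInv} (invertibility of $\mathsf{P}$) with a continuity/homotopy argument on the family of operators obtained by scaling the building blocks $\varphi(r)$. Concretely, for $\lambda \in [0,1]$ I would introduce the quasimultiplicative map $\varphi_\lambda$ determined by $\varphi_\lambda(s_i) = \lambda \mathsf{S}_i$ and $\varphi_\lambda(t_j) = \lambda \mathsf{T}_j$; one checks immediately that the scaled operators still satisfy all the hypotheses of Theorem \ref{ThPo} (the braid and commutation relations are homogeneous, the self-adjointness conditions are preserved under real scaling, and $\|\lambda\mathsf{S}_i\|_{\mathrm{op}}, \|\lambda\mathsf{T}_j\|_{\mathrm{op}} \le \lambda \le 1$), so $\varphi_\lambda$ is well defined and, when $\lambda<1$, satisfies the strict-norm hypothesis $\|\varphi_\lambda(r)\|_{\mathrm{op}}<1$ of Lemma \ref{LeInv}. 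Set $\mathsf{P}(\lambda) := \sum_{u \in G(m,1,n)} \varphi_\lambda(u)$; this is a polynomial in $\lambda$ with coefficients in $\mathscr{B}_{\mathbf{H}}$, hence $\lambda \mapsto \mathsf{P}(\lambda)$ is norm-continuous, and $\mathsf{P}(0) = \mathrm{id}_{\mathbf{H}}$ while $\mathsf{P}(1) = \mathsf{P}$.

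Next I would argue that each $\mathsf{P}(\lambda)$ is self-adjoint. This follows because for every $u \in G(m,1,n)$ one has $\varphi_\lambda(u)^* = \varphi_\lambda(u^{-1})$ — a fact that should be established once and for all from the generator-level self-adjointness hypotheses together with quasimultiplicativity and the observation that if $u = r_1 \cdots r_k$ is a reduced word in $\varPi$ then $u^{-1}$ admits the reduced word obtained by reversing and inverting the letters (using $s_i^{-1}=s_i$ and that $\sum_{i} (\mathsf{T}_j^i)^* = \sum_i \mathsf{T}_j^i$ handles the $t_j^{-1} = t_j^{m-1}$ contributions when one sums over the full cyclic subgroup $\langle t_j\rangle$). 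Summing $\varphi_\lambda(u)^* = \varphi_\lambda(u^{-1})$ over the group, which is closed under inversion, gives $\mathsf{P}(\lambda)^* = \mathsf{P}(\lambda)$. In particular $\mathsf{P}(\lambda)$ has real spectrum for every $\lambda \in [0,1]$.

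With these two ingredients the conclusion is a standard spectral-continuity argument: the spectrum $\sigma(\mathsf{P}(\lambda)) \subset \mathbb{R}$ varies upper-semicontinuously in $\lambda$, and by Lemma \ref{LeInv} (applied with the map $\varphi_\lambda$, valid for $\lambda<1$) together with invertibility of $\mathsf{P}(1)=\mathsf{P}$, $0 \notin \sigma(\mathsf{P}(\lambda))$ for all $\lambda \in [0,1]$. Since $\sigma(\mathsf{P}(0)) = \{1\} \subset (0,\infty)$ and the set $\{\lambda : \sigma(\mathsf{P}(\lambda)) \subset (0,\infty)\}$ is both open and closed in $[0,1]$ — open because $0$ stays off the spectrum and the spectrum cannot jump across $0$ without hitting it, closed by the same reasoning applied to a limit point — connectedness of $[0,1]$ forces $\sigma(\mathsf{P}(1)) \subset (0,\infty)$. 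A self-adjoint operator with spectrum in $(0,\infty)$ satisfies $\langle \mathsf{P}x,x\rangle \ge \big(\min \sigma(\mathsf{P})\big)\|x\|^2 > 0$ for $x \ne 0$, which is the claimed strict positivity.

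I expect the main obstacle to be the verification that $\varphi_\lambda(u)^* = \varphi_\lambda(u^{-1})$, i.e. that the generator-level self-adjointness hypotheses propagate to all group elements; the subtlety is that $t_j$ is not an involution, so a single reduced word for $t_j^{-1} = t_j^{m-1}$ need not be the reverse of a reduced word for $t_j$, and one must instead exploit that $\mathsf{P}$ (and $\mathsf{P}(\lambda)$) packages the $t_j$-contributions as the full sum $\sum_{i \in [0,m-1]}\varphi_\lambda(t_j)^i$, for which the hypothesis $\sum_{i}(\mathsf{T}_j^i)^* = \sum_i \mathsf{T}_j^i$ is exactly what delivers self-adjointness of that block; the remaining $\mathrm{Sym}(n)$-part is handled exactly as in Bo\.{z}ejko–Speicher \cite[Theorem~2.4]{BoSp}. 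Once self-adjointness of $\mathsf{P}(\lambda)$ is in hand, the homotopy argument is routine.
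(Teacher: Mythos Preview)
Your proposal is correct and follows essentially the same route as the paper: the paper introduces the identical one-parameter family $\varphi_q(r):=q\,\varphi(r)$, verifies self-adjointness of $\mathsf{P}_q$ via the factorisation $\mathsf{P}_q=\big(\sum_{t\in T}\varphi_q(t)\big)\big(\sum_{w\in\mathrm{Sym}(n)}\varphi_q(w)\big)$ (exactly the block decomposition you single out as the key point), and then uses continuity of the spectral bottom $\mathrm{m}_0(\mathsf{P}_q):=\inf_{\|x\|=1}\langle \mathsf{P}_q x,x\rangle$ together with Lemma~\ref{LeInv} to pass from $\mathrm{m}_0(\mathsf{P}_0)=1$ to $\mathrm{m}_0(\mathsf{P}_1)>0$. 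Your open--closed/connectedness argument on $\{\lambda:\sigma(\mathsf{P}(\lambda))\subset(0,\infty)\}$ is just a repackaging of the paper's Lipschitz continuity of $\mathrm{m}_0$ (quoted from \cite[Lemma~2.5]{BoSp}), so there is no substantive difference.
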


\begin{proof}
Bo\.{z}ejko and Speicher defined for a self-adjoint operator $\mathsf{L} \in \mathscr{B}_{\mathbf{H}}$ the number $$\mathrm{m}_0(\mathsf{L}) := \inf \big\{\langle \mathsf{L}x,x \rangle \in \mathbb{R}\ \big|\ x \in \mathbf{H},\, \|x\| = 1\big\}$$ which is in fact the smallest element in the spectrum of $\mathsf{L}$, and proved that, for $\mathsf{L}, \mathsf{K} \in \mathscr{B}_{\mathbf{H}}$ self-adjoint, $\big|\mathrm{m}_0(\mathsf{L}) - \mathrm{m}_0(\mathsf{K})\big| \leq \|\mathsf{L} - \mathsf{K}\|_{\mathrm{op}}$ \cite[Lemma~2.5]{BoSp}.

\noindent If $0 \leq q \leq 1$, let $\varphi_q: \mathbb{C}G(m,1,n) \rightarrow \mathscr{B}_{\mathbf{H}}$ be the quasimultiplicative linear map such that, for every $r \in \varPi$, $\varphi_q(r) := q \varphi(r)$. The operator $\displaystyle \mathsf{P}_q := \sum_{u \in G(m,1,n)} \varphi_q(u) \in \mathscr{B}_{\mathbf{H}}$ is self-adjoint since
\begin{align*}
\mathsf{P}_q^* & = \sum_{w \in \mathrm{Sym}(n)} \varphi_q(w)^* \sum_{t \in T} \varphi_q(t)^* \\
& = \sum_{w \in \mathrm{Sym}(n)} \varphi_q(w^{-1}) \bigg(\prod_{j \in [n]} \sum_{i \in [0,m-1]} \varphi_q(t_j)^i\bigg)^* \\
& = \sum_{w \in \mathrm{Sym}(n)} \varphi_q(w) \bigg(\prod_{j \in [n]} \sum_{i \in [0,m-1]} \varphi_q(t_j)^i\bigg) \\
& = \sum_{w \in \mathrm{Sym}(n)} \varphi_q(w) \sum_{t \in T} \varphi_q(t) \\
& = \mathsf{P}_q.
\end{align*}
Then, the map $q \mapsto \mathrm{m}_0(\mathsf{P}_q)$ is continuous, since the map $q \mapsto \mathsf{P}_q$ is norm-continuous and $\big|\mathrm{m}_0(\mathsf{P}_{q_1}) - \mathrm{m}_0(\mathsf{P}_{q_2})\big| \leq \|\mathsf{P}_{q_1} - \mathsf{P}_{q_2}\|_{\mathrm{op}}$. Remark that $\mathsf{P}_0 = \mathrm{id}_{\mathbf{H}}$ and $\mathsf{P}_1 = \mathsf{P}$. The invertibility of $\mathsf{P}_q$ deduced from Lemma~\ref{LeInv} implies $\mathrm{m}_0(\mathsf{P}_q) \neq 0$. As $\mathrm{m}_0(\mathsf{P}_0) = 1$, we necessarily have $\mathrm{m}_0(\mathsf{P}_q) > 0$, in particular $\mathrm{m}_0(\mathsf{P}_1) > 0$.
\end{proof}

\begin{proposition} \label{PrPo}
Let $\varPi = \{t_1, t_2, \dots, t_n, s_1, s_2, \dots, s_{n-1}\}$ be a representative reflection set of the complex reflection group $G(m,1,n)$, and $\varphi$ the quasimultiplicative map of Theorem~\ref{ThPo}. If $\|\varphi(r)\|_{\mathrm{op}} \leq 1$ for every $r \in \varPi$, then the operator $\displaystyle \mathsf{P} = \sum_{u \in G(m,1,n)} \varphi(u)$ is positive.
\end{proposition}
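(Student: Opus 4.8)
The plan is to obtain positivity of $\mathsf{P}$ for the boundary case $\|\varphi(r)\|_{\mathrm{op}} \leq 1$ as a limit of the strict positivity already established in Lemma~\ref{LeStr} for the strict case $\|\varphi(r)\|_{\mathrm{op}} < 1$. Concretely, for $q \in [0,1)$ I would introduce the quasimultiplicative map $\varphi_q$ determined by $\varphi_q(r) := q\,\varphi(r)$ for every $r \in \varPi$, exactly as in the proof of Lemma~\ref{LeStr}. Since $\|\varphi_q(r)\|_{\mathrm{op}} = q\,\|\varphi(r)\|_{\mathrm{op}} \leq q < 1$, the hypotheses of Lemma~\ref{LeStr} apply to $\varphi_q$, so the operator $\mathsf{P}_q := \sum_{u \in G(m,1,n)} \varphi_q(u)$ is strictly positive; in particular $\langle \mathsf{P}_q x, x\rangle \geq 0$ for all $x \in \mathbf{H}$.

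Next I would check that $q \mapsto \mathsf{P}_q$ is norm-continuous on $[0,1]$, with $\mathsf{P}_1 = \mathsf{P}$. This is the same observation used in Lemma~\ref{LeStr}: writing $\mathsf{P}_q = \sum_{w \in \mathrm{Sym}(n)} \varphi_q(w) \cdot \prod_{j \in [n]} \sum_{i \in [0,m-1]} \varphi_q(t_j)^i$, each $\varphi_q(u)$ is a finite product of the operators $q\mathsf{S}_i$ and $q\mathsf{T}_j$, hence a polynomial in $q$ with operator coefficients, and a finite sum of such is norm-continuous in $q$. Therefore for any fixed $x \in \mathbf{H}$ the scalar function $q \mapsto \langle \mathsf{P}_q x, x\rangle$ is continuous on $[0,1]$. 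Taking the limit $q \to 1^-$ of the inequality $\langle \mathsf{P}_q x, x\rangle \geq 0$ yields $\langle \mathsf{P}_1 x, x\rangle = \langle \mathsf{P} x, x\rangle \geq 0$, which is precisely positivity of $\mathsf{P}$. (One should note that $\mathsf{P}$ is self-adjoint by the $q = 1$ case of the computation in Lemma~\ref{LeStr}, so ``positive'' is meaningful here.)

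I do not expect a serious obstacle: the two genuine inputs — strict positivity in the open case, and norm-continuity of $q \mapsto \mathsf{P}_q$ — are both already available from Lemma~\ref{LeStr} and its proof. The only point requiring a little care is making sure the factorization $\mathsf{P}_q = \sum_{t \in T}\varphi_q(t)\sum_{w \in \mathrm{Sym}(n)}\varphi_q(w) = \prod_{j \in [n]}\big(\sum_{i \in [0,m-1]}\varphi_q(t_j)^i\big)\sum_{w \in \mathrm{Sym}(n)}\varphi_q(w)$ is valid, i.e. that $\varphi_q$ is well defined and quasimultiplicative and that every $u \in G(m,1,n)$ has a reduced expression compatible with the normal form $t_1^{\alpha_1}\cdots t_n^{\alpha_n} w$; but this is inherited from the setup of Theorem~\ref{ThPo} and is already implicitly used in Lemmas~\ref{LeInv} and~\ref{LeStr}. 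Hence the proof is essentially a one-line continuity/limit argument on top of Lemma~\ref{LeStr}.
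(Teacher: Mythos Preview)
Your proposal is correct and follows essentially the same approach as the paper: define $\varphi_q$ by scaling the generators, invoke Lemma~\ref{LeStr} to get strict positivity of $\mathsf{P}_q$ for $q<1$, and then pass to the limit $q\to 1^-$ using norm-continuity of $q\mapsto \mathsf{P}_q$. The paper's proof is just a terser version of exactly this argument.
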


\begin{proof}
The argument is similar to that of \cite[Theorem~2.2]{BoSp}. Let $\varphi_q: \mathbb{C}G(m,1,n) \rightarrow \mathscr{B}_{\mathbf{H}}$, for $0 \leq q < 1$, be the quasimultiplicative linear map in the proof of Lemma~\ref{LeStr}. We know from Lemma~\ref{LeStr} that $\displaystyle \mathsf{P}_q := \sum_{u \in G(m,1,n)} \varphi_q(u) \in \mathscr{B}_{\mathbf{H}}$ is strictly positive. As $\displaystyle \lim_{q \to 1^-} \mathsf{P}_q = \mathsf{P}$ uniformly, we get the result.
\end{proof}

\noindent For $v \in G(m,1,n)$, let $\theta_v: \mathbb{C}G(m,1,n) \rightarrow \mathbb{C}G(m,1,n)$ be the operator $$\displaystyle \theta_v(f) := vf= \sum_{u \in G(m,1,n)}f(v^{-1}u)u \quad \text{with norm} \quad \|\theta_v\|_{\mathrm{op}} = 1.$$
Define the quasimultiplicative linear map $\lambda: \mathbb{C}G(m,1,n) \rightarrow \mathscr{B}_{\mathbb{C}G(m,1,n)}$ given by $$\lambda(1) = \mathrm{id}_{\mathbb{C}G(m,1,n)}, \quad \forall i \in [n-1]:\, \lambda(s_i) = \theta_{s_i} \quad \text{and} \quad \forall j \in [n]:\, \lambda(t_j) = \theta_{t_j}.$$
One can easily verify that $\lambda(s_i)$ and $\displaystyle \sum_{i \in [m-1]} \lambda(t_j)^i$ are self-adjoint. 

\noindent We can now establish the proof of Theorem~\ref{ThPo}.

\begin{proof}
For $i \in [n-1]$ and $j \in [n]$, define the operators $\hat{\mathsf{S}}_i$ and $\hat{\mathsf{T}}_j$ on $\mathbb{C}G(m,1,n) \otimes \mathbf{H}$ respectively by $\hat{\mathsf{S}}_i := \lambda(s_i) \otimes \mathsf{S}_i$ and $\hat{\mathsf{T}}_j := \lambda(t_j) \otimes \mathsf{T}_j$. And let $\hat{\varphi}: \mathbb{C}G(m,1,n) \rightarrow \mathscr{B}_{\mathbb{C}G(m,1,n) \otimes \mathbf{H}}$ be the quasimultiplicative linear map given by $$\hat{\varphi}(1) = \mathrm{id}_{\mathbb{C}G(m,1,n) \otimes \mathbf{H}}, \quad \forall i \in [n-1]:\, \hat{\varphi}(s_i) = \lambda(s_i) \otimes \mathsf{S}_i \quad \text{and} \quad \forall j \in [n]:\, \hat{\varphi}(t_j) = \lambda(t_j) \otimes \mathsf{T}_j.$$ As $\|\hat{\varphi}(s_i)\|_{\mathrm{op}} \leq 1$ and $\|\hat{\varphi}(t_j)\|_{\mathrm{op}} \leq 1$, we deduce from Proposition~\ref{PrPo} that the operator $\displaystyle \hat{\mathsf{P}} = \sum_{u \in G(m,1,n)} \hat{\varphi}(u)$ is positive. Let $x_1, \dots, x_k \in \mathbf{H}$, $f_1, \dots, f_k \in \mathbb{C}G(m,1,n)$, and set $$\displaystyle y = \sum_{u \in G(m,1,n)} u \otimes \sum_{i \in [k]}f_i(u^{-1})x_i \in \mathbb{C}G(m,1,n) \otimes \mathbf{H}.$$
Then, on the Hilbert space $\mathbb{C}G(m,1,n) \otimes \mathbf{H}$, we have
\begin{align*}
0 \leq \langle \hat{\mathsf{P}}y,y \rangle & = \sum_{u \in G(m,1,n)} \Big\langle \big(\lambda(u) \otimes \varphi(u)\big)y \, , \, y \Big\rangle \\
& = \sum_{u,v,w \in G(m,1,n)} \sum_{i,j \in [k]} \Big\langle \big(\lambda(u) \otimes \varphi(u)\big)\big(v \otimes f_i(v^{-1})x_i \big) \, , \, w \otimes f_j(w^{-1})x_j     \Big\rangle \\
& = \sum_{u,v,w \in G(m,1,n)} \sum_{i,j \in [k]} \Big\langle \theta_{u}(v) \otimes f_i(v^{-1}) \, \varphi(u)x_i \, , \, w \otimes f_j(w^{-1})x_j \Big\rangle \\
& = \sum_{u,v,w \in G(m,1,n)} \sum_{i,j \in [k]} \langle uv, w \rangle \, \Big\langle f_i(v^{-1}) \, \varphi(u)x_i \, , \, f_j(w^{-1})x_j \Big\rangle \\
& = \sum_{u,v,w \in G(m,1,n)} \sum_{i,j \in [k]} \langle uv,w \rangle \, \Big\langle \overline{f_j(w^{-1})} f_i(v^{-1}) \, \varphi(u)x_i \, , \, x_j \Big\rangle \\
& = \sum_{u,v \in G(m,1,n)} \sum_{i,j \in [k]} \Big\langle \overline{f_j(v^{-1}u^{-1})} f_i(v^{-1}) \, \varphi(u)x_i \, , \, x_j \Big\rangle \\
& =  \sum_{i,j \in [k]} \bigg\langle \varphi\Big(\sum_{u,v \in G(m,1,n)} \overline{f_j(v^{-1})} f_i(v^{-1}u)  \,u\Big)x_i \, , \, x_j \bigg\rangle \\
& = \sum_{i,j \in [k]} \big\langle \varphi(f_j^*\,f_i)x_i,\, x_j \big\rangle.
\end{align*}
\end{proof}

\section{Representation on Fock Space} \label{SeTh2}

\noindent We prove Theorem~\ref{ThQu} in this section. The inner product defined by Bo\.{z}ejko and Speicher \cite[Theorem~3.1]{BoSp} is a special case of $\langle \centerdot , \centerdot \rangle_G$. If $\{e_i\}_{i \in I}$ is some basis of $\mathbf{H}$ and $q_{ij} \in \mathbb{C}$, they used it for the construction of the Fock representation of the $q_{ij}$-relations $$\mathsf{d}(e_i) \mathsf{d}^*(e_j) - q_{ij} \mathsf{d}^*(e_j) \mathsf{d}(e_i) = \delta_{ij}$$
where $q_{ji} = \bar{q}_{ij}$ and $|q_{ij}| \leq 1$. Meljanac and Svrtan independently did the same construction \cite[§~1.1]{MeSv}. That $q_{ij}$-relation model is a generalization of previously studied models \cite{Grb1}, \cite{Sp}, \cite{Za}.

\smallskip

\noindent Recall that the canonical free creation and annihilation operators $\displaystyle \mathsf{l}^*(x): \bigoplus_{n \in \mathbb{N}} \mathbf{H}^{\otimes n} \rightarrow \bigoplus_{n \in \mathbb{N}} \mathbf{H}^{\otimes n}$ and $\displaystyle \mathsf{l}(x): \bigoplus_{n \in \mathbb{N}} \mathbf{H}^{\otimes n} \rightarrow \bigoplus_{n \in \mathbb{N}} \mathbf{H}^{\otimes n}$ respectively, for each $x \in \mathbf{H}$, are given by \cite[§~2]{Ev} 
\begin{align*}
& \mathsf{l}^*(x)\varOmega = x \quad \text{and} \quad \mathsf{l}^*(x)\, x_1 \otimes \dots \otimes x_n = x \otimes x_1 \otimes \dots \otimes x_n, \\
& \mathsf{l}(x)\varOmega = 0 \, \ \quad \text{and} \quad \mathsf{l}(x)\, x_1 \otimes \dots \otimes x_n = \langle x, x_1 \rangle \, x_2 \otimes \dots \otimes x_n,
\end{align*}
for $x_1, \dots, x_n \in \mathbf{H}$. Consider the operator
$$\mathsf{R}_n := \Big(\mathrm{id}_{\mathbf{H}^{\otimes n}} + \sum_{i \in [m-1]} \mathsf{T}_1^i\Big) \, \Big(\mathrm{id}_{\mathbf{H}^{\otimes n}} + \sum_{j \in [n-1]} \prod_{i \in [j]}^{\rightarrow}  \mathsf{S}_i\Big) \in \mathscr{B}_{\mathbf{H}^{\otimes n}}.$$
We define the creation and annihilation operators $\mathsf{d}^*(x)$ and $\mathsf{d}(x)$ in Theorem~\ref{ThQu} by $$\mathsf{d}^*(x) := \mathsf{l}^*(x) \quad \text{and} \quad \mathsf{d}(x)\, x_1 \otimes \dots \otimes x_n := \mathsf{l}(x)\,\mathsf{R}_n\, x_1 \otimes \dots \otimes x_n.$$

\noindent Remark that they are clearly not adjoint with respect to the usual inner product $\langle \centerdot , \centerdot \rangle$.

\noindent We can now establish the proof of Theorem~\ref{ThQu}.

\begin{proof}
As stated in Lemma~\ref{LeStr}, $\mathsf{P}_n$ is strictly positive, then $\langle \centerdot , \centerdot \rangle_G$ is an inner product. By definition of $\mathsf{S}_i$ and $\mathsf{T}_j$, we have $\mathsf{l}^*(x) \mathsf{S}_i = \mathsf{S}_{i+1} \mathsf{l}^*(x)$ and $\mathsf{l}^*(x) \mathsf{T}_j = \mathsf{T}_{j+1} \mathsf{l}^*(x)$, which implies
$$\mathsf{l}^*(x) \mathsf{P}_n = (\mathrm{id}_{\mathbf{H}} \otimes \mathsf{P}_n) \mathsf{l}^*(x) \quad \text{or} \quad \mathsf{P}_n \mathsf{l}(x) =  \mathsf{l}(x) (\mathrm{id}_{\mathbf{H}} \otimes \mathsf{P}_n).$$
Note that $\displaystyle \mathsf{P}_{n+1} = \Big(\sum_{u \in G(m,1,n)} \varphi(u)\Big) \Big(\varphi(1) + \sum_{i \in [m]} \varphi(t_1^i) \Big) \Big(\varphi(1) + \sum_{j \in [n]} \prod_{i \in [j]}^{\rightarrow} \varphi(s_i)\Big)$, where the complex reflection group $G(m,1,n)$ is generated by $\{t_2, t_3, \dots, t_{n+1}, s_2, s_3, \dots, s_n\}$ as reflection set, $\displaystyle \sum_{u \in G(m,1,n)} \varphi(u) = \mathsf{P}_n$, and $\displaystyle \Big(\varphi(1) + \sum_{i \in [m]} \varphi(t_1^i) \Big) \Big(\varphi(1) + \sum_{j \in [n]} \prod_{i \in [j]}^{\rightarrow} \varphi(s_i)\Big) = \mathsf{R}_{n+1}$. Then,
$$\mathsf{P}_{n+1} = (\mathrm{id}_{\mathbf{H}} \otimes \mathsf{P}_n) \mathsf{R}_{n+1}.$$
Now, for $X \in \mathbf{H}^{\otimes n}$ and $Y \in \mathbf{H}^{\otimes n+1}$, we have
\begin{align*}
\big\langle\mathsf{d}^*(x)X,\, Y\big\rangle_G & = \big\langle\mathsf{d}^*(x)X,\, \mathsf{P}_{n+1}Y\big\rangle \\
& = \big\langle X,\, \mathsf{l}(x)\mathsf{P}_{n+1}Y\big\rangle \\
& = \big\langle X,\, \mathsf{l}(x)(\mathrm{id}_{\mathbf{H}} \otimes \mathsf{P}_n) \mathsf{R}_{n+1}Y\big\rangle \\
& = \big\langle X,\, \mathsf{P}_n \mathsf{l}(x) \mathsf{R}_{n+1} Y\big\rangle \\
& = \big\langle X,\, \mathsf{P}_n \mathsf{d}(x) Y\big\rangle \\
& = \big\langle X,\, \mathsf{d}(x) Y\big\rangle_G.
\end{align*}

\noindent Moreover, since
$$\|\mathsf{R}_{n+1}\|_{\mathrm{op}} \leq \Big(\sum_{i \in [0,n]} \|\mathsf{S}\|_{\mathrm{op}}^i\Big) \Big(\sum_{j \in [0,m-1]} \|\mathsf{T}\|_{\mathrm{op}}^j\Big) \leq \frac{1}{\big(1- \|\mathsf{S}\|_{\mathrm{op}}\big) \big(1- \|\mathsf{T}\|_{\mathrm{op}}\big)},$$
then 
$$\|\mathsf{P}_{n+1} \mathsf{P}_{n+1}\|_{\mathrm{op}} = \big\|(\mathrm{id}_{\mathbf{H}} \otimes \mathsf{P}_n) \mathsf{R}_{n+1} \mathsf{R}_{n+1}^* (\mathrm{id}_{\mathbf{H}} \otimes \mathsf{P}_n) \big\|_{\mathrm{op}} \leq \frac{\big\|(\mathrm{id}_{\mathbf{H}} \otimes \mathsf{P}_n) (\mathrm{id}_{\mathbf{H}} \otimes \mathsf{P}_n) \big\|_{\mathrm{op}}}{\big(1- \|\mathsf{S}\|_{\mathrm{op}}\big)^2\, \big(1- \|\mathsf{T}\|_{\mathrm{op}}\big)^2},$$
hence $\displaystyle \|\mathsf{P}_{n+1}\|_{\mathrm{op}} \leq \frac{\big\|(\mathrm{id}_{\mathbf{H}} \otimes \mathsf{P}_n) \big\|_{\mathrm{op}}}{\big(1- \|\mathsf{S}\|_{\mathrm{op}}\big) \big(1- \|\mathsf{T}\|_{\mathrm{op}}\big)}$. Denoting by $\|\centerdot\|_G$ the norm associated to $\langle \centerdot , \centerdot \rangle_G$, for $X \in \mathbf{H}^{\otimes n}$, we consequently have
\begin{align*}
\big\|\mathsf{d}^*(x)X\big\|_G & = \big\langle \mathsf{d}^*(x)X,\, \mathsf{d}^*(x)X\big\rangle_G \\
& = \langle x \otimes X,\, x \otimes X \rangle_G \\
& = \langle x \otimes X,\, \mathsf{P}_{n+1}\, x \otimes X \rangle \\
& \leq \frac{1}{\big(1- \|\mathsf{S}\|_{\mathrm{op}}\big) \big(1- \|\mathsf{T}\|_{\mathrm{op}}\big)} \big\langle x \otimes X,\, (\mathrm{id}_{\mathbf{H}} \otimes \mathsf{P}_n)\, x \otimes X \big\rangle \\
& = \frac{1}{\big(1- \|\mathsf{S}\|_{\mathrm{op}}\big) \big(1- \|\mathsf{T}\|_{\mathrm{op}}\big)} \langle x,x \rangle \langle X,\, \mathsf{P}_n X \rangle_G \\
& = \frac{\|x\|\, \|X\|_G}{\big(1- \|\mathsf{S}\|_{\mathrm{op}}\big) \big(1- \|\mathsf{T}\|_{\mathrm{op}}\big)}.
\end{align*}
\end{proof}

\bibliographystyle{abbrvnat}

\end{document}